\newcommand{\tr}{\rm{T\!r}}
\newcommand{\Meas}{\rm{Meas}}
\newtheorem{definition}{Definition}
\newtheorem{theorem}{Theorem}
\begin{document}

\title{Limit spectral measures of matrix distributions of metric triples
}

\author{{A.~M.~Vershik \thanks{St Petersburg Department
of V. A. Steklov Mathematical Institute of Russian Academy
of Sciences, St Petersburg State University, Institute for Information
Transmission Problems of Russian Academy of Sciences (Kharkevich Institute); supported by RSF grant project no. 21-11-00152}},
 {F.~V.~Petrov \thanks{St Petersburg Department
of V. A. Steklov Mathematical Institute of Russian Academy
of Sciences, St Petersburg State University; supported by RSF grant project no. 21-11-00152}}}

\maketitle

\begin{abstract}
A notion of the limit spectral measure of a metric triple (i.e., a metric measure space) is defined.
If the metric is square integrable, then the limit spectral measure
is deterministic and coinsides with the spectrum of the integral
operator in $L^2(\mu)$ with kernel $\rho$. We construct an example
in which there is no deterministic spectral measure.
\end{abstract}

\section{Definitions}

Consider a \emph{metric triple} $\tau=(X,\mu,\rho)$ (here triple is for space-measure-metric), 
where $(X,\mu)$ is standard continuous probability measure  space 
(Lebesgue -- Rokhlin space), and $(X,\rho)$ is a
complete separable metric (Polish) space, and the
measure $\mu$ is a Borel and non-degenerate (=with a full support) measure.
For the metric triple $\tau=(X,\mu,\rho)$
consider the so-called \emph{matrix distribution} \cite{Ve}, i.e. a measure on infinite matrices
of distances $$M_{\tau}=(\rho(x_i,x_j))_{i,j=1}^\infty,$$
where $\{x_i\}_{i=1,\dots}$ is an infinite sequence of independent random variables distributed 
by the measure $\mu$.
This measure has important properties and, 
first of all, it is invariant and ergodic with respect to the infinite symmetric group acting on matrices by simultaneous substitutions of rows and columns.
Before discussing other properties of matrix distributions, let us formulate a theorem on the classification of $mm$-spaces.

The exact terminology has not yet been worked out: Gromov's notion \cite{Gr}  ``$mm$-space'' --- that is, a
space with a measure and a metric, has not been defined quite precisely. 
The notion of an ``admissible triple" $(X,\mu,\rho)$ which we introduced earlier in \cite{VPZ} has
 exactly defined meaning, but the name does not seem expressive enough.
 We propose the term "metric triple"; note that the  word ``metric" applies to 
 both measures and metrics (see also the survey \cite{VVZ}).

An isomorphism between two metric triples $\tau_1(X_1,\mu_1,\rho_1)$ and
 $\tau_2=(X_2,\mu_2,\rho_2)$ is an invertible measure-preserving isometry: 
 $$T:X_1\to X_2, T_*(\mu_1)=\mu_2, \rho_2(Tx,Ty)=\rho_1(x,y).$$
Denote the matrix distributions of these triples by $M_{\tau_i}, i=1,2$.

The measures $\mu_1,\mu_2$, if not stated otherwise, are assumed to be continuous and
non-degenerated (that is, positive on all open sets).

\begin{theorem}[Gromov-- Vershik] Two metric triples
 $\tau_1$ and $\tau_2$ are isomorphic if and only if
 their matrix distributions, as measures on matrices, coincide: $M_{\tau_1}= M_{\tau_2}$.
\end{theorem}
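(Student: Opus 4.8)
The plan is to prove the two directions separately. The forward direction is the easy one: if $T:X_1\to X_2$ is an isomorphism of metric triples, then for an i.i.d. sequence $\{x_i\}$ sampled from $\mu_1$, the pushed-forward sequence $\{Tx_i\}$ is i.i.d. from $\mu_2$ (since $T_*\mu_1=\mu_2$ and $T$ is a measurable bijection), and the distance matrices agree entry-by-entry because $\rho_2(Tx_i,Tx_j)=\rho_1(x_i,x_j)$. Hence the two random matrices have the same distribution, i.e. $M_{\tau_1}=M_{\tau_2}$.

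The substance is the converse: reconstructing the metric triple (up to isomorphism) from the distribution $M_\tau$ of its random distance matrix. The key idea is that almost every realization of the infinite matrix $(\rho(x_i,x_j))$ already encodes the triple. The plan is to observe that for a $\mu$-typical sequence $\{x_i\}$, the set $\{x_i\}$ is dense in $X$ (by non-degeneracy and separability of $\mu$), so the abstract metric space $(X,\rho)$ is recovered as the completion of the countable metric space carried by the index set with distances read off from the matrix entries $a_{ij}=\rho(x_i,x_j)$. Moreover the measure $\mu$ is recovered by the law of large numbers / empirical-measure argument: for a typical sequence, the empirical measures $\frac1n\sum_{i=1}^n \delta_{x_i}$ converge weakly to $\mu$, and this convergence can be detected purely from the matrix by looking at empirical distributions of the columns. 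Thus from $\mu$-almost every matrix $M=(a_{ij})$ one can canonically reconstruct a metric triple $\tau(M)$, and one shows $\tau(M)\cong\tau$ for a.e.\ $M$.

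Given this reconstruction map $M\mapsto\tau(M)$, the converse follows: if $M_{\tau_1}=M_{\tau_2}$ as measures, then a single random matrix $M$ drawn from this common law yields, almost surely, $\tau(M)\cong\tau_1$ and $\tau(M)\cong\tau_2$ simultaneously, whence $\tau_1\cong\tau_2$. I would make this rigorous by fixing the full-measure sets on which each reconstruction succeeds, intersecting them (still full measure since the laws coincide), and picking any matrix in the intersection. The transitivity of isomorphism then delivers $\tau_1\cong\tau_2$.

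The main obstacle is verifying that the reconstruction map is genuinely well-defined and measurable, and in particular that it is \emph{intrinsic} --- that is, that one can read off the measure $\mu$ from the matrix without reference to the original labeling of points. The delicate point is that the matrix records only pairwise distances among the sample points, so recovering $\mu$ requires arguing that the empirical measure along the random sequence converges and that its limit is an isometry-invariant of the matrix; this is where the ergodicity of $M_\tau$ under the infinite symmetric group (noted in the text) does the real work, guaranteeing that the limiting empirical data is deterministic and hence a function of the matrix alone. Handling the completion step carefully---ensuring the reconstructed Polish space is isometric to $(X,\rho)$ and that the reconstructed measure has full support---is the remaining technical content, but these are essentially applications of separability, non-degeneracy, and the strong law of large numbers for empirical measures.
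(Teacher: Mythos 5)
The paper does not prove this theorem---it is stated as the known Gromov--Vershik classification theorem with references to \cite{Gr,Ve} and the survey \cite{VVZ}---so there is no in-paper argument to compare against. Your proposal is the standard reconstruction proof from those sources (trivial forward direction; for the converse, almost every sample is dense in $X$ by full support plus separability, the empirical measures $\frac1n\sum_{i\le n}\delta_{x_i}$ converge weakly to $\mu$ almost surely, so almost every matrix canonically determines the triple up to isomorphism, and intersecting the two full-measure sets finishes the argument), and it is essentially correct; the one inaccuracy is attributing the determinism of the reconstruction to the $S_\infty$-ergodicity of $M_\tau$, which is a consequence of the i.i.d.\ structure (Hewitt--Savage) rather than the working ingredient---the almost sure weak convergence of empirical measures (Varadarajan's theorem) is what actually does the job.
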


Concerning this theorem, see \cite{Gr,Ve}, and the survey \cite{VVZ}.

Thus, every metric triple by this theorem is uniquely determib  ned by its matrix distribution upto isomorphism.

For each metric triple $\tau=(X,\mu,\rho)$, we may try to associate the integral operator
$I_\tau$ for which the metric is the kernel: $$(I_{\tau}f)(y)=\int \rho(x,y)f(x)d\mu(x).$$

If the metric $\rho$ is square summable with repect to the measure
 $\mu$: $\int_{X \times X}\rho(x,y)^2 d\mu(x)d\mu(y)<\infty,$ then this operator
 is well defined in the space $L^2_{\mu}(X)$, it is self-adjoint and is a Hilbert -- Schmidt operator. 
 Thus, it is compact and has a purely point spectrum, which is a square summable sequence of real
 numbers. But if the metric is only summable, then the
 natural domain of $I_{\tau}$ is rather intricate,  and dealing with spectrum requires  caution.

For example, under the assumption of integrability, in particular in the case of compactness or finiteness of the diameter of the metric space, this operator is known to be defined in the space of measurable bounded functions $L^{\infty}$.
It seems to be difficult to express in terms of metric the possibility of considering
this operator as a (at least unbounded closed) operator in $L^2_{\mu}(X)$ and say something about its spectrum.

But our approach of studying metric triples concerns not this potentially definable operator, but
matrix distributions of the space.

For a matrix $M_n$, an upper minor of order $n$ of a random matrix
$M_\tau$, consider the corresponding spectral measure on the real line,
which we think about as the sum of unit delta-measures in the eigenvalues of $M_n$ 
(multiplicity respected). On teh set
$\Meas$ of measures on the line consider the topology defined by the functionals 
$\mu\to \int fd\mu$, where $f$ is a continuous function on the line
with a bounded support not containing 0.

\begin{definition}
The spectral measure of the metric triple is the limit in the space $\Meas({\Bbb R})$, if it exists, 
of a sequence of measures of the form $d\mu_n(x\cdot a_n)$, where $a_n$ is some normalizing sequence of positive numbers.
\end{definition}

Our goal is to find out the conditions on the metric that guarantee the existence of the spectral measure ,
its computation and, most importantly, to investigate whichinformation about the properties of the triple  does spectral measure
retain. At present these questions have not been studied yet, if we do not have in mind the case of finiteness 
of the integral of the square of the metric. 
As far as we know, even the answer to the question posed earlier by the first author as to whether the
$mm$-space is determined by a spectral measure, is open.
Most likely, the general answer is negative,
but it is not clear whether it happens to be positive in any sufficiently rich class of metrics. It is interesting to relate
metric invariants (e.g., entropy) and spectral invariants of metric triples.

In numerous works the problem similar to the above one has been studied, but not for metrics and $mm$-spaces, but for
symmetric kernels of general form. The main consequence for
our case is that for a metric integrable with the square the spectral measure
(to be normalized by $a_n\equiv n$)
coincides with the purely point spectrum of the operator $I_\tau$ (see above), that is, the sum of the delta-measures in its
eigenvalues, cf. \cite{KG}[Theorem 3.1].

\section{Main question and example}

Our main example shows that there may be no deterministic limit spectral measure.
The authors are not aware of similar examples in the theory of U-statistics, but we also consider them, apparently for the first time, in application to matrix distributions.

The main question is:
 \emph{for which metric triples the limit spectral measure (as a measure on $\Meas$) is
 a non-degenerate measure?}

 We give a simple example of the situation, not calculating the spectral measure completely, but showing only
 nondeterminacy of some functional on the limit measure.

 Namely, we consider the behavior of the trace of the square of the minor of oder $n$
 and show
 that it already has a nondegenerate distribution in the limit.

 This distribution is related to the behavior of $U$-statistics of the form $$\sum_{i<j} h(x_i,x_j),$$
 where $x_i$ are i.i.d., but the first moment of a non-negative random variable
 $h(x_1,x_2)$ is infinite. We have been unable to find such a general statement in the extensive literature on
 $U$-statistics (see \cite{KB,BG} et al.)

 \begin{theorem}
 Consider as a metric triple a real line with the usual Euclidean metric and
 as a measure --- the standard Cauchy distribution $\pi^{-1}\frac{dx}{1+x^2}$.
 Then the normlaized by $n^3$ traces of the squares of the minors of the matrix distribution 
 converge to some non-degenerate $\alpha$-stable distribution with $\alpha=1/2$.
   \end{theorem}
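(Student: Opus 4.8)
The plan is to reduce the spectral functional to a sum of i.i.d.\ heavy-tailed summands and then apply the classical stable limit theorem. Since the minor $M_n=(|x_i-x_j|)_{i,j=1}^n$ is symmetric with vanishing diagonal, I would first record the algebraic identity
$$\tr(M_n^2)=\sum_{i,j=1}^n |x_i-x_j|^2=\sum_{i,j=1}^n (x_i-x_j)^2 = 2nQ_n - 2S_n^2,$$
where $S_n=\sum_{i=1}^n x_i$ and $Q_n=\sum_{i=1}^n x_i^2$. This identity is what replaces the Hoeffding decomposition of the $U$-statistic $\sum_{i<j}(x_i-x_j)^2$: the usual decomposition into conditional expectations is unavailable here precisely because $E[x^2]=\infty$ for the Cauchy law, which is the whole point of the example.

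Next I would carry out the tail analysis. For the standard Cauchy one has $P(|X|>t)=\frac{2}{\pi}\arctan(1/t)\sim \frac{2}{\pi t}$, so the non-negative variable $Y=X^2$ satisfies $P(Y>s)\sim \frac{2}{\pi}s^{-1/2}$ as $s\to\infty$. Thus $Y$ has a regularly varying tail of index $\alpha=1/2\in(0,1)$ and lies in the domain of attraction of a one-sided (maximally skewed, positive) $1/2$-stable law. By the classical domain-of-attraction theorem, the norming constant for a sum of $n$ such variables is $n^{1/\alpha}=n^2$, and no centering is required because $\alpha<1$. Hence $Q_n/n^2$ converges in distribution to a non-degenerate positive $1/2$-stable random variable $W$.

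I would then control the cross term. Since the Cauchy law is $1$-stable, $S_n/n$ is itself Cauchy-distributed for every $n$, so $(S_n/n)^2$ is tight and $S_n^2/n^3=(S_n/n)^2/n\to 0$ in probability. Dividing the identity by $n^3$ yields
$$\frac{\tr(M_n^2)}{n^3}=\frac{2Q_n}{n^2}-\frac{2S_n^2}{n^3},$$
and by Slutsky's theorem — the second term tending to $0$ in probability, the dependence between $Q_n$ and $S_n$ being irrelevant — the left-hand side converges in distribution to $2W$, which is again a non-degenerate $1/2$-stable law by the scaling property of stable distributions.

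The routine steps are the algebraic identity and the tail estimate. The step requiring the most care will be the precise invocation of the stable limit theorem: I must pin down the $n^2$ norming, verify that the limit is the genuinely non-degenerate maximally skewed positive $1/2$-stable law, and confirm that the negligibility of $S_n^2/n^3$ leaves this limit untouched. This is also where the paper's conclusion — non-determinacy of the spectral measure — comes from, since a single spectral functional, namely the suitably normalized second moment $\int x^2\,d\mu_n$ of the spectrum, already possesses a non-degenerate distributional limit, so no deterministic limit spectral measure can exist for this triple.
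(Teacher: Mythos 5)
Your proposal is correct and follows essentially the same route as the paper: the identity $\tr M_n^2=2nQ_n-2S_n^2$, the observation that $S_n/n$ is again Cauchy so $S_n^2/n^3\to 0$ in probability, the regular variation of the tail of $x_1^2$ with index $1/2$, and the classical stable limit theorem with norming $n^2$. (Incidentally, your tail constant $\frac{2}{\pi\sqrt{s}}$ is the correct one; the paper's $\frac{1}{\pi\sqrt{t}}$ drops a factor of $2$, which is immaterial for the conclusion.)
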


   \begin{proof}
   The trace of the square of our random $n\times n$-matrix $M_n$ equals to the sum of squares of its entries, i.e.,
   $$\frac12 \rm tr M_n^2=\sum_{1\leqslant i<j\leqslant n} |x_i-x_j|^2=n\sum x_i^2-(\sum x_i)^2,$$
   where $(x_1,\ldots,x_n)$ is Cauchy i.i.d. sample.
  The random variable $\frac 1n \sum x_i$ again has the standard Cauchy distribution, thus 
   $(\sum x_i)^2$ is of order $n^2$, in particular,  $n^{-3}(\sum x_i)^2$ converges to 0 in probability. On the other hand, $x_i^2$ 
   is a non-negative random variable with tails of order $${\rm prob} (x_1^2>t)={\rm prob} (|x_1|>\sqrt{t})\sim \frac1{\pi \sqrt{t}}, t\to \infty.$$
   It is known \cite{IL} that in this case $n^{-2}(x_1^2+\ldots+x_n^2)$ 
   converges in distribution to the appropriate $1/2$-stable law. Thus so does $\frac1{2n^3} \tr M_n^2$.
   \end{proof}

It would be interesting to describe the distribution not only of the sum of squares
of eigenvalues, but also of arbitrary powers and the eigenvalues themselves.

\section{Formulation of the general problem and relation to random matrices}

In \cite{Bog}, at the suggestion of the first author, preliminary calculations of spectra of matrix distributions were done
for the most natural triples: $n$-dimensional spheres with Euclidean metric normalized by a Lebesgue measure. 
The results of the calculations indicated a significant difference in the spectral measures already for spheres of small dimensions and not very large orders of minors.
Unfortunately, the authors do not know whether these experiments were continued. Of course, in these cases
due to compactness the limit spectrum is deterministic and should coincide with the spectrum of the integral operator $I$, (which was not clear then) and which apparently has been known for a long time.

The fundamental question is whether there are metric triples (with a non-integrable metric)
whose limit spectrum is similar to the limit spectra of random matrix theory (i.e. semicircular and similar laws.) To clarify this difficult question we should first
compare the theory of matrix distributions with the theory of random matrices.

 Matrix distributions as measures on the space of infinite matrices are $S_{\infty}$-invariant and ergodic with respect to the infinite symmetric group. It makes sense to compare the collection of such measures with a similar 
 collection of $U_{\infty}$-invariant ergodic measures on
 of the space of infinite Hermitian matrices, 
 which correspond to Wishart statistics, cf. \cite{Pic, OV, VPEMJ}.
 The common feature of these and other measures is that they are parameterized by similar sets of frequencies, which differs from measures on matrix spaces with independent elements (GOE, GUE, white noise.)

\end{document}